\documentclass[10pt,reqno]{amsart}
\usepackage{amsmath, amsthm}

\textwidth 14cm     
\textheight 21.6cm

\makeatletter

\numberwithin{equation}{section}
\newtheorem{thm}{\indent\bf Theorem}[section]
\newtheorem{lemma} [thm] {\indent\bf Lemma}

\newtheorem{Rem}   [thm] {\indent\bf Remark}
\newtheorem{ex}   [thm] {\indent\bf Example}

\let\sst=\scriptscriptstyle 

\begin{document}

\title[Morrey type spaces and multiplication operator in Sobolev spaces]
{Morrey type spaces and multiplication operator in Sobolev spaces}
\author{A. Canale}
\address{Universit\`a degli Studi di Salerno, Dipartimento di Matematica,  
Via Giovanni Paolo II  n.132, 84084 FISCIANO (Sa), Italy.}
\email{acanale@unisa.it}
\author{C. Tarantino}
\address{Universit\`a degli Studi di Napoli {\it Federico II}, 
Dipartimento di Scienze Economiche e Statistiche,
Complesso Universitario Monte Sant'Angelo, Via Cintia, 80126, Napoli, Italy.}
\email{ctarant@unina.it}

\begin{abstract}

The paper deals with the operator $u\rightarrow gu$ 
defined in the Sobolev space $W^{r,p}(\Omega)$
and which takes values in $L^p(\Omega)$
when $\Omega$ is an unbounded open subset in $R^n$. 
The functions $g$ belong to wider 
spaces of $L^p$ connected with the Morrey type spaces.
$L^p$ estimates and compactness results are stated.

\end{abstract}

\maketitle
\thispagestyle{empty}

\bigskip

{\bf AMS Subject Classifications}:  35J25, 46E35

\bigskip

{\bf Key Words}: multiplication operator, Morrey type spaces, $L^p$ estimates,
compactness results.

\bigskip\bigskip

\section {Introduction} 

\bigskip 
Let $\Omega$ be an unbounded open subset in $R^n$. 

In literature there are different results about the study of 
{\it multiplication operator} for a suitable function 
$g:\;\Omega\rightarrow C$ 
\begin{equation}\label{multiplication}
u\longrightarrow gu,\
\end{equation}
as an operator defined in a Sobolev space (with or without weight) 
and which takes values in a $L^p(\Omega)$ space.
 
In $W^{1,p}_0 (\Omega)$ or in $W^{1,p}(\Omega)$ 
with $\Omega$ regular enough, 
reference results are 
some well-known inequalities which state the boundedness of 
(\ref{multiplication}): 
Hardy type inequalities (see 
H.Brezis \cite{2}, A.Kufner \cite{12}, J.Ne{\v c}as \cite{13}) 
when $g(x)$ is an appropriate power of the distance of $x$ from a subset 
of $\partial \Omega$, C.Fefferman inequality \cite{10} (see, e.g. 
F.Chiarenza-M.Franciosi \cite{9}, F.Chiarenza-M.Frasca \cite{10}) 
obtained when $g$ belongs to suitable Morrey spaces.

Our interest is to study {\it multiplication operator}
in the Sobolev space $W^{r,p}(\Omega)$, 
$r\in N$, \break
$1\le p<+\infty$ when $g$ belongs to suitable 
Morrey type spaces ${\mathcal M}^{p,s}$ introduced in \cite{8}. 

In the recent paper \cite{8} have been stated an embedding result
and a Fefferman type inequality. 

This paper, which can be considered as a continuation of the previous work,
deepens the study of such spaces and, in addition,  
introduces meaningful subspaces of these spaces
in which $L^p$ estimates and compactness results are stated.

As we noted in \cite{8}, one of the aspects of our interest 
lies in the fact that this type of inequalities are useful tools to prove a priori bounds
when studying elliptic equations. These estimates enable us to state existence and uniqueness results.
For applications in the study of the a priori bounds
see \cite{3}, \cite{4}, \cite{5}, \cite{6}, \cite{7}. The spaces considered in
some of these papers are connected to the spaces ${\mathcal M}^{p,s}$.

These spaces are wider than $L^p$ spaces, than classical Morrey space and 
are connected, for suitable values of $s$,
to the well known Morrey spaces defined 
when $\Omega$ is an unbounded open subset in $R^n$.

In Section 2 and Section 3 we complete the description of the spaces ${\mathcal M}^{p,s}$
with respect to the previous paper
analyzing their inclusion properties
and the relations between such spaces and the Morrey spaces introduced 
in bounded and unbounded domanis until now.

In Section 4 we state  $L^p$ estimates when the functions $g$ belong
to suitable subspaces ${\tilde {\mathcal M}}^{p,s}$ and ${\mathcal M}^{p,s}_0$ defined in Section 2.
We explain also the dependence of the constants in the $L^p$ bounds. To this aim it is necessary 
to introduce a kind of modulus of continuity of functions belonging to the Morrey type spaces.
In the process of approximation by functions more regular the dependence 
of the constants in the estimates plays a key role.

The compactness result for the multiplication operator is stated in Section 5.

In conclusion, one of the main aspects of the paper consists in the study of a class of spaces 
that generalizes the definition of classical Morrey space, $L^{p,n-sp}$, $s\le \frac{n}{p}$.

Furthemore we observe that the spaces ${\mathcal M}^{p, s-{\frac{n}{p}}}$, $ s\ge \frac{n}{p}$,
provides an intermediate space between $L^p(\Omega)$ and  $L^p_{loc}(\overline\Omega)$
in addition to the fact that includes $L^\infty(\Omega)$.

The estimates obtained when 
the multiplication operator uses functions belonging to Morrey spaces,
as we pointed out above, are used in the study of elliptic equations.

We emphasize that our results are obtained 
using tools different from
those used to get some similar estimates in type Morrey spaces in unbounded domains
and are of more general type
(cfr. \cite{6}, \cite{7} where one can also find some references).

\bigskip\bigskip

\section {Notations and Morrey type spaces} 

\bigskip 

Let $R^n$ be the $n$-dimensional real euclidean space. We set 
$$
B_r(x)=\{y\in R^n :\; |y-x|< r\},\quad B_r=B_r(0)\, 
\qquad \forall x\in R^n,\>\forall r\in R_+.
$$ 
For any $x\in R^n$, we call {\it 
open infinite cone} having vertex at $x$ every set of the type 
$$
\{x+\lambda (y-x):\,\lambda\in 
R_+,\,\,|y-z|<r\},
$$ 
where $r\in R_+$ and $z\in R^n$ are such that $|z-x|>r$. 

For all $\theta\in ]0,\pi/2[$ and for all $x\in R^n$ 
we denote by $C_\theta(x)$ an open infinite cone 
having vertex at $x$ and opening $\theta$. 

For a fixed $C_\theta(x)$, we set 
$$
C_\theta(x,h)=C_\theta(x)\cap B_h(x)\,,\qquad\forall h\in R_+.
$$ 

\noindent Let $\Omega$ be an open set in $R^n$. 
We denote by $\Gamma (\Omega,\theta,h)$ the family of open cones 
$C\subset\subset\Omega$ of opening $\theta$ and height $h$. 

We assume that the following hypothesis is satisfied: 

\begin{itemize}
\medskip

\item[$h_1$)]  There exists 
$\theta\in ]0,{\pi/ 2}[$ such that 
$$\forall x\in \Omega\qquad \exists C_{\theta}(x)\quad \hbox{such that} \quad 
\overline 
{C_{\theta}(x, \rho)}\subset \Omega.
$$ 
\end{itemize}
\medskip
\noindent
Let $(\Omega_\rho(x))_{x\in \Omega}$ be the family of open sets 
in $R^n$ defined as
$$\Omega_\rho(x)= B_\rho(x)\cap \Omega, 
\qquad x\in \Omega,
\qquad \rho>0.$$ 
If $1\le p< +\infty$ and $s\in R$, we denote by ${\mathcal M}^{p,s}(\Omega)$  
the space of functions $g\in L^p_{loc}(\overline\Omega)$ such that 
\begin{equation}\label{space}
\|g\|_{\mathcal M^{p,s}(\Omega)}=\sup_{x\in \Omega\atop \rho\in ]0,d]}\left(\rho^{s-n/p}\; 
\|g\|_{L^p(\Omega_\rho(x))}\right)<+\infty, 
\qquad d>0,
\end{equation} 
equipped with the norm defined by (\ref{space}). 
Furthermore let ${\tilde {{\mathcal M}}}^{p,s}(\Omega)$ be the closure
of $L^\infty(\Omega)$ in ${\mathcal M}^{p,s}(\Omega)$
and ${\mathcal M}^{p,s}_0(\Omega)$ the closure of
$C^\infty_0(\Omega)$ in ${\mathcal M}^{p,s}(\Omega)$.

\vfill\eject

\begin{Rem}
We observe that:

\begin{itemize}
\medskip

\item[1.]  If $\Omega$ is a bounded open set, 
$s\in\left[0,n/p\right]$  
and $d=\hbox{diam}\,\Omega$, 
then
${\mathcal M}^{p,s}(\Omega)$ is the space $L^{p,n-sp}(\Omega)$,
the classical Morrey space.

\medskip

\item[2.] If $\Omega=R^n$ then
$L^{p,n-sp}(\Omega)\subset {\mathcal M}^{p,s}(\Omega)$, 
$s\in\left[0,n/p\right]$.

\medskip 

\item[3.] The spaces ${\mathcal M}^{p,s}(\Omega)$
are riduced to the known Morrey space $M^{p,n-sp}(\Omega)$,
$s\in\left]0,n/p\right]$,
introduced in analogy to the classical Morrey spaces  
when $\Omega$ is an unbounded open set.

\medskip

\item[4.] For $s=0$, the norm (\ref{space}) is a sort of
average integral on $B_\rho(x)$. In particular this is true if 
the family  $\{\Omega_\rho (x)\}$ shrinks nicely to $x$, that is if
$\Omega_\rho(x)\subset B_\rho(x)$ for any $\rho > 0$ and
there is a constant $\alpha> 0$, independent of $\rho$, such that 
$|\Omega_\rho(x)|> \alpha |B_\rho(x)|$. 
The Theorem 3.1 in Section 3 states that 
$L^\infty(\Omega)$ is continuously embedded in ${\mathcal M}^{p,0}(\Omega)$. 

\medskip

\item[5.] If $s<0$, in analogy to the classical Morrey spaces, 
one can see that 
${\mathcal M}^{p,s}(\Omega)=\{0\}$ by
the Lebesgue differentiation Theorem. 
\end{itemize}
\end{Rem}

\bigskip 
For functions that belong to the spaces ${\tilde {\mathcal M}}^{p,s}(\Omega)$
we can state the following alternative definition.
Let $\Sigma(\Omega)$ the 
$\sigma$-algebra of Lebesgue measurable subsets of $\Omega$. 
\bigskip

\begin  {lemma}\label {sigma}

A function $g \in {\tilde {\mathcal M}}^{p,s}(\Omega)$ if 
and only if $g\in {\mathcal M}^{p,s}(\Omega)$ and the function 

\begin{equation}\label{sigma}
\sigma^{p,s}_g:\quad t\in [0,1]\longrightarrow 
\sup_{E\in \Sigma(\Omega)\atop 
\displaystyle\mathop{\hbox{sup}}_{x\atop {\rho\in]0,d] }}
\scriptstyle{\rho^{-n}|E\cap B_\rho (x)|\le t}}
\|g\chi_{\sst E}\|_{{\mathcal M}^{p,s}(\Omega)}
\end{equation}
is continuous in zero, 
where $\chi_{E}$ denotes the characteristic function of $E$.
\end{lemma}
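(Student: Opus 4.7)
The plan is to prove the biconditional by two standard approximation arguments. For the necessity I use the definition of $\tilde{\mathcal{M}}^{p,s}(\Omega)$ as the closure of $L^\infty(\Omega)$ together with the triangle inequality; for the sufficiency I exhibit an explicit $L^\infty$-approximant by truncating $g$ at height $k$ and invoke the continuity of $\sigma_g^{p,s}$ at $0$ to drive the error to zero in $\mathcal{M}^{p,s}(\Omega)$.

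For $(\Rightarrow)$, pick $g_k \in L^\infty(\Omega)$ with $\|g - g_k\|_{\mathcal{M}^{p,s}(\Omega)} \to 0$. For any $E \in \Sigma(\Omega)$ admissible at level $t$ in \eqref{sigma}, the triangle inequality together with the pointwise bound $|g_k \chi_E| \le \|g_k\|_{L^\infty(\Omega)}\chi_E$ yields
\[
\|g\chi_E\|_{\mathcal{M}^{p,s}(\Omega)} \le \|g - g_k\|_{\mathcal{M}^{p,s}(\Omega)} + \|g_k\|_{L^\infty(\Omega)}\,\|\chi_E\|_{\mathcal{M}^{p,s}(\Omega)}.
\]
The admissibility $|E\cap B_\rho(x)| \le t\rho^n$ for all $x$ and $\rho \in (0,d]$, combined with $s \ge 0$ (Remark 2.1, item 5) and $\rho \le d$, gives $\|\chi_E\|_{\mathcal{M}^{p,s}(\Omega)} = \sup_{x,\rho}\rho^{s-n/p}|E\cap\Omega_\rho(x)|^{1/p} \le d^s t^{1/p}$. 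Taking the supremum over admissible $E$, then $t \to 0^+$, then $k \to \infty$, produces $\lim_{t \to 0^+}\sigma_g^{p,s}(t) = 0 = \sigma_g^{p,s}(0)$.

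For $(\Leftarrow)$, set $g_k = g\,\mathbf{1}_{\{|g|\le k\}} \in L^\infty(\Omega)$ and $E_k = \{|g|>k\}$, so that $g - g_k = g\chi_{E_k}$. Chebyshev's inequality on each $\Omega_\rho(x)$ combined with the Morrey type bound gives
\[
|E_k \cap B_\rho(x)| \le k^{-p}\int_{\Omega_\rho(x)}|g|^p \le k^{-p}\|g\|_{\mathcal{M}^{p,s}(\Omega)}^p\,\rho^{n-sp},
\]
which together with the trivial bound $|E_k \cap B_\rho(x)| \le \omega_n\rho^n$ controls the admissibility parameter $t_k := \sup_{x, \rho \in (0,d]}\rho^{-n}|E_k\cap B_\rho(x)|$. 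Once $t_k \to 0$ is established, the continuity hypothesis gives $\|g - g_k\|_{\mathcal{M}^{p,s}(\Omega)} = \|g\chi_{E_k}\|_{\mathcal{M}^{p,s}(\Omega)} \le \sigma_g^{p,s}(t_k) \to 0$, so $g \in \tilde{\mathcal{M}}^{p,s}(\Omega)$. The main technical obstacle is precisely this uniform decay of $t_k$: for $s > 0$ the Chebyshev factor $\rho^{-sp}$ blows up at small scales, so one must interpolate the two bounds above, choosing the crossover scale $\rho \sim k^{-1/s}\|g\|_{\mathcal{M}^{p,s}(\Omega)}^{1/s}$ carefully, so that $t_k$ is controlled by $k^{-p}\|g\|_{\mathcal{M}^{p,s}(\Omega)}^p$ up to a Morrey rescaling and hence tends to $0$.
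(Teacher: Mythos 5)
Your necessity half ($g\in\tilde{\mathcal M}^{p,s}(\Omega)\Rightarrow\sigma^{p,s}_g$ continuous at zero) is correct and is essentially the paper's own argument: triangle inequality plus the bound $\rho^{s-n/p}|E\cap B_\rho(x)|^{1/p}\le\rho^{s}t^{1/p}\le d^{s}t^{1/p}$ for admissible $E$. The sufficiency half also follows the paper's route (truncate $g$ at height $k$ and apply the continuity of $\sigma^{p,s}_g$ to $E_k=\{|g|>k\}$), but the step you yourself flag as the main obstacle --- showing $t_k=\sup_{x,\,\rho\in]0,d]}\rho^{-n}|E_k\cap B_\rho(x)|\to0$ --- is not closed by the interpolation you sketch, and it cannot be closed from the two bounds you list when $s>0$. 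Below your crossover scale $\rho\sim k^{-1/s}\|g\|_{\mathcal M^{p,s}(\Omega)}^{1/s}$ the only available estimate is the scale-invariant trivial bound $\rho^{-n}|E_k\cap B_\rho(x)|\le\omega_n$, which does not decay in $k$, so the supremum over $\rho\in]0,d]$ of the minimum of the two bounds stays of size $\omega_n$ for every large $k$. Concretely, take $\Omega=R^n$, $0<s$ with $sp<n$, and $g(x)=|x|^{-s}\chi_{B_1}(x)$: then $g\in\mathcal M^{p,s}(\Omega)$, but $E_k\supset B_{k^{-1/s}}(0)$, so choosing $x=0$ and $\rho=k^{-1/s}$ gives $t_k=\omega_n$ for all large $k$. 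No choice of crossover scale repairs this; the deferred claim ``$t_k\to0$'' is simply false in general for $s>0$, and it is exactly the point on which your proof of this direction rests.

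For comparison, the paper's proof handles the same step by asserting the Chebyshev-type bound $\sup_{x,\,\rho\in]0,d]}\rho^{-n}|\Omega_r(g)\cap B_\rho(x)|\le c\,\|g\|^p_{\mathcal M^{p,s}(\Omega)}/r^p$ with $c$ independent of $r$. This is immediate for $s=0$, where $\int_{\Omega_\rho(x)}|g|^p\le\rho^{n}\|g\|^p_{\mathcal M^{p,0}(\Omega)}$, but for $s>0$ Chebyshev only yields $r^{-p}\rho^{-sp}\|g\|^p_{\mathcal M^{p,s}(\Omega)}$, and the asserted uniform bound fails for the same power-type example; so your instinct that something delicate happens at small scales is well placed --- the problem is that the proposed fix is not available. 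Note that the truncation argument does go through verbatim if the smallness of $E$ is measured at the single scale $d$, i.e. with the constraint $\sup_x|E\cap B_d(x)|\le t$ (the function $\tau^p_s[g]$ of Section 4), since then Chebyshev at scale $d$ gives $\sup_x|E_k\cap B_d(x)|\le k^{-p}d^{\,n-sp}\|g\|^p_{\mathcal M^{p,s}(\Omega)}\to0$; but that is a weaker smallness condition than the all-scales one appearing in $\sigma^{p,s}_g$, so keeping the statement as written requires either restricting to $s=0$ or a genuinely different argument for this direction.
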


\medskip

\begin{proof}

Let $g$ be a function belonging to the space ${\mathcal M}^{p,s}(\Omega)$ and let
$\sigma^{p,s}_g$ be continuous in zero. 
We denote by $\delta_\epsilon$ a positive number such that 

$$
E\in \Sigma(\Omega)\,,\quad 
\sup_{x\in \Omega\atop {\rho\in]0,d] }}\left( \rho^{-n}|E\cap B
_\rho (x)|\right)\le \delta_\epsilon 
\Longrightarrow \|g\chi_{\sst E}\|_{{\mathcal M}^{p,s}(\Omega)}\le \epsilon
$$ 
and by $\Omega_r$ the set

\begin{equation}\label{omega r}
\Omega_r(g)=\{x\in \Omega\,:\;|g(x)|\ge r\}\qquad\forall r\in R_+.
\end{equation}
Then there exists a positive constant $c\in R_+$, independent of $r$ and $g$, 
such that 

$$
\sup_{x\in \Omega\atop {\rho\in]0,d] }}\left( \rho^{-n}|\Omega_r\ \cap B_\rho(x)|\right) 
\le c {\frac{\|g\|^p_{{\mathcal M}^{p,s}}(\Omega)} {r^p}}.
$$ 
If we set
$$
r_\epsilon=c\,\left({\|g\|^p_{{\mathcal M}^{p,s}(\Omega)} 
\over \delta_\epsilon}\right)^{1/ p}\,,
$$
we get
$$
\sup_{x\in \Omega\atop {\rho\in]0,d] }}\left( \rho^{-n}|\Omega_{r_\epsilon}
\cap B_\rho(x)|\right)\le \delta_\epsilon
$$ 
and, as a consequence, 
$$
\|g\chi_{{\sst \Omega_{k_\epsilon}}}\|_{{\mathcal M}^{p,s}(\Omega)}\le \epsilon\,.
$$ 
Let us introduce now the function
$$
g_\epsilon=g-g\chi_{{\sst \Omega_{k_\epsilon}}}.
$$ 
Evidently $g_\epsilon \in L^\infty (\Omega)$ and 
$$
\|g-g_\epsilon\|_{{\mathcal M}^{p,s}(\Omega)}= 
\|g\chi_{{\sst \Omega_{k_\epsilon}}}\|_{{\mathcal M}^{p,s}(\Omega)}\le \epsilon\,.
$$ 

\medskip

\noindent Conversely, let us assume $g\in{\tilde {\mathcal M}}^{p,s}\Omega)$. 
Then there exists $g_\epsilon\in L^\infty(\Omega)$ such that 
$$
\|g-g_\epsilon\|_{{\mathcal M}^{p,s}\Omega)}\le \epsilon/2.
$$ 
So we get
\begin{equation}
\begin{split}
\|g\chi_{\sst E}\|_{{\mathcal M}^{p,s}(\Omega)}&\le 
\|(g-g_\epsilon)\chi_{\sst E}\|_{{\mathcal M}^{p,s}(\Omega)}+ 
\|g_\epsilon\chi_{\sst E}\|_{{\mathcal M}^{p,s}(\Omega)} \le
\\& 
\le\epsilon/2+\rho^s\|g_\epsilon\|_{L^\infty(\Omega)} \sup_{x\in 
\Omega\atop {\rho\in]0,d] }}\left( \rho^{-n}|E\cap B_\rho (x)|\right)^{1\over p} 
\end{split}
\end{equation}
from which we deduce that
$$\|g\chi_{\sst E}\|_{{\mathcal M}^{p,s}(\Omega)}\le \epsilon$$ 
for any $E\in \Sigma(\Omega)$ such that 
$$
\sup_{x\in\Omega\atop {\rho\in]0,d] }}\rho^{-n}|\Omega_\rho(x)\cap E|\le 
\left({\epsilon\over 
{2\,d^s\|g_\epsilon \|_{L^\infty(\Omega)}}}\right)^p
$$ 
and the lemma is proved.

\end{proof}

\bigskip\bigskip

\section {Inclusion properties} 

\bigskip 

The following Theorem states inclusion properties of the spaces ${\mathcal M}^{p,s}(\Omega)$.
Some of these, $a)$ and $b)$, introduced in the previous paper \cite{8}, will be proved again 
for sake of completeness showing explicitly the value of the constants in the estimates.
\bigskip

\begin{thm} \label{inclusions}
The following inclusions hold:

\begin{itemize}
\medskip

\item[$a)$]
$L^\infty(\Omega)\hookrightarrow {\mathcal M}^{p,s}(\Omega)$ 
$\qquad\forall p\in[1,+\infty[\quad$ and 
$\quad\forall s\ge 0$.  

\medskip

\item[$b)$]
$L^q(\Omega) 
\hookrightarrow {\mathcal M}^{q,s}(\Omega)
\qquad \forall s\ge \frac{n}{q},\qquad$
$1\le p\le q < +\infty$.

\medskip

\item[$c)$]
${\mathcal M}^{q,s}(\Omega)
\hookrightarrow {\mathcal M}^{p,s}(\Omega), 
\qquad 1\le p\le q < +\infty$.

\medskip

\item[$d)$]
${\mathcal M}^{q,\mu}(\Omega)\hookrightarrow {\mathcal M}^{p,\lambda}(\Omega)\qquad$
$\lambda, \mu>0\quad$
$\frac{\lambda-n}{p}\le \frac{\mu-n}{q},\quad$ 
$1\le p\le q < +\infty$.
\medskip

\item[$e)$]
$L^q(\Omega) 
\subset {\mathcal M}^{p,s}_0(\Omega)
\subset {\tilde {\mathcal M}}^{p,s}(\Omega) 
\qquad \forall s\ge \frac{n}{q}\qquad$
$1\le p\le q < +\infty$.

\medskip

\item[$f)$]
${\mathcal M}^{q,s}(\Omega) 
\subset {\tilde {{\mathcal M}}^{p,s}}(\Omega),
\qquad $
$1\le p< q < +\infty$.

\end{itemize}
\end{thm}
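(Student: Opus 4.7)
My plan is to work through parts (a)--(f) with H\"older's inequality as the main tool, combined with the control $|\Omega_\rho(x)|\le|B_\rho(x)|=c_n\rho^n$ that follows from $\Omega_\rho(x)\subset B_\rho(x)$.

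For (a), I would estimate $\|g\|_{L^p(\Omega_\rho(x))}\le\|g\|_{L^\infty(\Omega)}|\Omega_\rho(x)|^{1/p}$, pull out $c_n^{1/p}\rho^{n/p}$, and note that the remaining factor $\rho^s$ is dominated by $d^s$ on $\rho\in(0,d]$ when $s\ge 0$. Part (b) is even more direct: $\|g\|_{L^q(\Omega_\rho(x))}\le\|g\|_{L^q(\Omega)}$, and $\rho^{s-n/q}\le d^{s-n/q}$ for $s\ge n/q$. For (c), H\"older's inequality with exponents $q/p$ and its conjugate $q/(q-p)$ gives $\|g\|_{L^p(\Omega_\rho(x))}\le c_n\rho^{n/p-n/q}\|g\|_{L^q(\Omega_\rho(x))}$; multiplying by $\rho^{s-n/p}$ produces exactly $\rho^{s-n/q}\|g\|_{L^q(\Omega_\rho(x))}$, which is bounded by $\|g\|_{\mathcal{M}^{q,s}(\Omega)}$. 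Part (d) refines this by combining the same H\"older step with the $\mathcal{M}^{q,\mu}$ bound: after H\"older one reads off
\[
\rho^{\lambda-n/p}\|g\|_{L^p(\Omega_\rho(x))}\le c_n\,\rho^{\lambda-\mu}\cdot\rho^{\mu-n/q}\|g\|_{L^q(\Omega_\rho(x))},
\]
and the assumed relation between $\lambda,\mu,p,q$ is what makes the $\rho^{\lambda-\mu}$ factor stay under control on $(0,d]$.

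For part (e) the strategy is to chain (b) and (c) so as to get the continuous embedding $L^q(\Omega)\hookrightarrow\mathcal{M}^{q,s}(\Omega)\hookrightarrow\mathcal{M}^{p,s}(\Omega)$ when $s\ge n/q$ and $p\le q$. Given $g\in L^q(\Omega)$, the standard density of $C_0^\infty(\Omega)$ in $L^q(\Omega)$ (valid since $1\le q<+\infty$) supplies an approximating sequence; continuity of the embedding then yields convergence in $\mathcal{M}^{p,s}(\Omega)$, so $g\in\mathcal{M}^{p,s}_0(\Omega)$. The second inclusion $\mathcal{M}^{p,s}_0(\Omega)\subset\tilde{\mathcal{M}}^{p,s}(\Omega)$ is immediate from $C_0^\infty(\Omega)\subset L^\infty(\Omega)$ together with monotonicity of closures in the same ambient space.

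For (f), I would invoke Lemma 2.1 and show that the modulus $\sigma^{p,s}_g$ of any $g\in\mathcal{M}^{q,s}(\Omega)$ is continuous at $0$. Given a measurable set $E\subset\Omega$, H\"older applied to $\int_{\Omega_\rho(x)\cap E}|g|^p$ with exponents $q/p$ and $q/(q-p)$ gives $\|g\chi_E\|_{L^p(\Omega_\rho(x))}\le\|g\|_{L^q(\Omega_\rho(x))}|E\cap\Omega_\rho(x)|^{1/p-1/q}$, and since $|E\cap\Omega_\rho(x)|\le|E\cap B_\rho(x)|\le t\rho^n$, multiplying by $\rho^{s-n/p}$ leads to
\[
\rho^{s-n/p}\|g\chi_E\|_{L^p(\Omega_\rho(x))}\le t^{1/p-1/q}\|g\|_{\mathcal{M}^{q,s}(\Omega)}.
\]
Hence $\sigma^{p,s}_g(t)\le t^{1/p-1/q}\|g\|_{\mathcal{M}^{q,s}(\Omega)}\to 0$ as $t\to 0^+$ because $p<q$ forces $1/p-1/q>0$. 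The strictness of $p<q$ enters in an essential way here, which is exactly why (f) demands $p<q$ and not merely $p\le q$. The main obstacle I anticipate is in part (d): keeping careful track of the exponent bookkeeping and verifying that the stated condition on $\lambda,\mu,p,q$ is indeed the right hypothesis to keep $\rho^{\lambda-\mu}$ bounded uniformly on $(0,d]$, with a constant depending on $d$ recorded explicitly.
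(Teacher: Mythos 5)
Parts (a), (b), (c), (e) and (f) of your proposal are correct and coincide, essentially line by line, with the paper's own proof: the same H\"older estimates combined with $|\Omega_\rho(x)|\le\omega_n\rho^n$, the same chaining of $L^q(\Omega)\hookrightarrow\mathcal M^{q,s}(\Omega)\hookrightarrow\mathcal M^{p,s}(\Omega)$ with the density of $C^\infty_0(\Omega)$ in $L^q(\Omega)$ for (e), and for (f) exactly the bound $\sigma^{p,s}_g(t)\le t^{1/p-1/q}\|g\|_{\mathcal M^{q,s}(\Omega)}$ followed by the characterization of $\tilde{\mathcal M}^{p,s}(\Omega)$ (Lemma 2.2), with membership of $g$ in $\mathcal M^{p,s}(\Omega)$ supplied by (c).

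The genuine gap is in part (d), precisely at the point you yourself left open. If the norm of $\mathcal M^{p,\lambda}(\Omega)$ is taken literally from definition (2.2), i.e. $\sup\rho^{\lambda-n/p}\|g\|_{L^p(\Omega_\rho(x))}$, then your H\"older step is correct but the surviving factor is $\rho^{\lambda-\mu}$, which is bounded on $(0,d]$ if and only if $\lambda\ge\mu$; the hypothesis $\frac{\lambda-n}{p}\le\frac{\mu-n}{q}$ does not imply $\lambda\ge\mu$ (take $p=q$ and any $\lambda<\mu$), so the claim that ``the assumed relation is what makes $\rho^{\lambda-\mu}$ stay under control'' is false, and the verification you postponed cannot be carried out in this form. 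Indeed, under this literal reading the inclusion itself can fail while the hypothesis holds: for $n=2$, $p=q=1$, $\mu=2$, $\lambda=1$, the function $|y-x_0|^{-3/2}$ localized near an interior point $x_0$ belongs to $\mathcal M^{1,2}$ but not to $\mathcal M^{1,1}$. The paper's proof of (d) instead works with the classical Morrey normalization (consistent with Remark 2.1, where $\mathcal M^{p,s}=L^{p,n-sp}$): H\"older gives
\[
\rho^{-\lambda}\int_{\Omega_\rho(x)}|g|^p\le \omega_n^{1-\frac pq}\,
\rho^{\,n\left(1-\frac pq\right)+\mu\frac pq-\lambda}
\left(\rho^{-\mu}\int_{\Omega_\rho(x)}|g|^q\right)^{\frac pq},
\]
and the hypothesis, rewritten as $\lambda\le n\left(1-\frac pq\right)+\mu\frac pq$, is exactly the condition that makes the exponent of $\rho$ nonnegative, hence the factor bounded by a power of $d$. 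So to complete (d) you must either read the second index there as the classical Morrey exponent appearing in $\rho^{-\lambda}\int_{\Omega_\rho(x)}|g|^p$ (as the paper's computation tacitly does), in which case the stated condition is exactly the right one, or keep definition (2.2) verbatim and replace the hypothesis by $\lambda\ge\mu$.
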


\vfill\eject

\begin{proof} $$ $$

\begin{itemize}
\medskip

\item[$a)$]
If $g\in L^\infty(\Omega)$ we get
 
\begin{equation}
\begin{split}
\|g\|_{{\mathcal M}^{p,s}(\Omega)}&= 
\sup_{x\in \Omega\atop \rho\in]0,d]}\left(\rho^{s-n/p}
\;\| g\|_{L^p(\Omega_\rho(x))}\right)\le
\\&
\le \| g\|_{L^{\infty}(\Omega)} 
\sup_{x\in\Omega\atop \rho+\in]0,d]}\rho^s 
\left(\rho^{-n/p} |\Omega_\rho(x)|^{1/p}\right) 
=c\|g\|_{L^\infty(\Omega)}
\end{split}
\end{equation}
where, if $\omega_n$ denotes the volume of the unit ball $B(0,1)$,
we get $c=\omega_n^{\frac{1}{p}}d^s$.

\bigskip

\item[$b)$-$c)$]
By H{\"o}lder inequality it follows 

\begin{equation}
\begin{split}
\| g\|_{{\mathcal M}^{p,s}(\Omega)} &\le
\sup_{x\in \Omega\atop \rho\in]0,d]}(\rho^{s-{n\over p}}
\|g\|_{L^q(\Omega_\rho(x))}|\Omega_\rho(x)|^{{1\over p}-{1\over q}})\le
\\&
\le c_1\sup_{x\in \Omega\atop \rho\in]0,d]}
\rho^{s-{n\over q}}\|g\|_{L^q(\Omega_\rho(x))}
\le c_2
\|g\|_{L^q(\Omega)}, 
\end{split}
\end{equation}
with $c_2=c_1 d^{s-\frac{n}{q}}=\omega_n^{{1\over p}-{1\over q}}
d^{s-\frac{n}{q}}$.

\bigskip

\item[$d)$]
As above by H{\"o}lder inequality, since 
$\lambda\le n\left(1-\frac{p}{q}\right)+\frac{\mu p}{q}$, 
it results

\begin{equation}
\begin{split}
\int_{\Omega_\rho(x)}|g|^p &\le
|\Omega_\rho(x)|^{1-\frac{p}{q}}
\left(\int_{\Omega_\rho(x)}|g|^q\right)^{\frac{p}{q}}\le
\\& \le
\omega_n^{1-\frac{p}{q}}
\rho^{n\left(1-\frac{p}{q}\right)+\mu\frac{p}{q}}
\left(\rho^{-\mu}\int_{\Omega_\rho(x)}|g|^q\right)^{\frac{p}{q}}\le
\\&
\le \omega_n^{1-\frac{p}{q}} \rho^\lambda
d^{n\left(1-\frac{p}{q}\right)+\mu\frac{p}{q}-\lambda}
\left(\rho^{-\mu}\int_{\Omega_\rho(x)}|g|^q\right)^{\frac{p}{q}}
\end{split}
\end{equation}
from which
$$
\left(\rho^{-\lambda}\int_{\Omega_\rho(x)}
|g|^p\right)^{\frac{1}{p}}\le
c \left(\rho^{-\mu}\int_{\Omega_\rho(x)}
|g|^q\right)^{\frac{1}{q}}
$$
with $c=\omega_n^{1-\frac{p}{q}}
d^{n\left(1-\frac{p}{q}\right)+\mu\frac{p}{q}-\lambda}$.
So we can deduce the assertion.

\bigskip

\item[$e)$]
Let $g\in L^q(\Omega)$. By density of $C^\infty_0(\Omega)$ 
in $L^p$ it follows that
there exists a function 
$\phi_\epsilon\in C^\infty_0(\Omega)$ such that
$$
\|g-\phi_\epsilon\|_{L^q(\Omega)}\le 
\frac{\epsilon}{\omega_n^{{1\over p}-{1\over q}}d^{s-\frac{n}{q}}}.
$$
Then we get, again by H{\"o}lder inequality,

\begin{equation}
\begin{split}
\|g-\phi_\epsilon\|_{{\mathcal M}^{p,s}(\Omega)}&= 
\sup_{x\in \Omega\atop \rho\in]0,d]}\rho^{s-n/p}
\left(\| g-\phi_\epsilon\|_{L^p(\Omega_\rho(x))}\right)\le
\\&
\le \sup_{x\in\Omega\atop \rho\in]0,d]}
 \rho^{s-n/p}\| g-\phi_\epsilon\|_{L^q(\Omega_\rho(x))}
|\Omega_\rho(x)|^{{1\over p}-{1\over q}})\le 
\\&
\le \omega_n^{{1\over p}-{1\over q}}
d^{s-\frac{n}{q}}\| g-\phi_\epsilon\|_{L^q(\Omega)}<\epsilon.
\end{split}
\end{equation}
\medskip
\noindent The second inclusion is obvious.

\bigskip

\item[$f)$]

We observe that, if $g\in {\mathcal M}^{q,s}(\Omega)$, from 
$c)$ we have $g\in {\mathcal M}^{p,s}(\Omega)$. Furthermore 

\begin{equation}
\begin{split}
\|g\chi_{\sst E}\|_{{\mathcal M}^{p,s}(\Omega)}&= 
\sup_{x\in \Omega\atop{\rho\in ]0,d]}} \rho^{s-\frac{n}{p}}
\|g\|_{L^p(\Omega_\rho(x)\cap E)}\leq
\\&
\le  \sup_{x\in \Omega\atop{\rho\in ]0,d]}}\rho^{s-\frac{n}{p}}
|\Omega_\rho(x)\cap E|^{{1\over p}-{1\over q}} 
\|g\|_{L^q(\Omega_\rho(x)\cap E)} \le
\\ &\le 
\| g\|_{{\mathcal M}^{q,s}(\Omega)}\sup_{x\in \Omega\atop{\rho\in ]0,d]}}
(\rho ^{-n}\; |\Omega_\rho(x)\cap E|)^{{1\over p}-{1\over q}},
\end{split}
\end{equation}
and we deduce that the function $\sigma^{p,s}_g$, defined by (\ref{sigma}), is 
continuous in zero. From Lemma \ref{sigma} it follows that 
$g\in {\tilde {\mathcal M}}^{p,s}(\Omega)$. 

\end{itemize}

\end{proof}

The following simple examples show that the inclusion
\begin{equation}\label{inclusion}
L^p(\Omega) 
\subset {\mathcal M}^{p,\frac{n}{p}}(\Omega).
\end{equation}
is a strict inclusion.

\bigskip

\begin{ex}

The constant functions belong to $S^{p,\frac{n}{p}}(\Omega)$ and
do not belong to $L^p(\Omega)$. 
\end{ex}

\bigskip

\begin{ex}

The function 
$\frac{1}{1+|x|^\alpha}$ belongs to $S^{p,\frac{n}{p}}$ for any $\alpha>0$
but does not belong to $L^p$ if $\alpha\in ]0,\frac{n}{p}]$. 
\end{ex}

\bigskip\bigskip

\section{$L^p$ estimates}

\bigskip

Let $r, p, q$ be real number with the condition 

$$
h_2)\quad r\in N,\qquad 
1\le p\le q<+\infty, \qquad q\ge 
{n\over r}, \qquad q>{n\over r} \quad 
\hbox{if}\quad {n\over r}=p>1 .
$$ 
The Theorem \ref{embedding theo} below was stated in \cite{8}. 
We will derive our $L^p$ estimates using this result.

The proof of the Theorem uses the following inequality (see again \cite{8}, Lemma 4.2)
which states a connection between the functions belonging to $L^1(\Omega)$ 
and the  functions in $L^1(\Omega_\rho(x))$:

\begin{equation}\label {L1}
c_1\|v\|_{L^1(\Omega)}\le 
\int_{\Omega} \rho^{-n}\|v\|_{L^1(\Omega_\rho(x))}dx\le c_2
\|v\|_{L^1(\Omega),}
\quad c_1, c_2\in R_+,
\quad\forall v\in L^1(\Omega).
\end{equation}

\bigskip

\begin{thm}\label {embedding theo}  
If  $h_1)$ and $h_2)$  hold, 
then for any $g\in {\mathcal M}^{q,\frac{s}{p}}(\Omega)$, $s\le p$, and 
for any $u\in W^{r,p}(\Omega)$ 
we get $gu\in L^p(\Omega)$ and 

\begin{equation}\label{bound 2} 
\|gu\|_{L^p(\Omega)}\le C\|g\|_{{\mathcal M}^{q,\frac{s}{p}}(\Omega)} 
\|u\|_{W^{r,p}(\Omega)} ,
\end{equation}
where the constant $c=c(p,q,r,n)$ is independent of $g$ and $u$. 
\end{thm}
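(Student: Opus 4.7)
The approach is to localize the global $L^p(\Omega)$ integral of $gu$ via the averaging inequality (\ref{L1}), decouple $g$ and $u$ by H\"older's inequality on each small set $\Omega_\rho(x)$ (where the Morrey-type norm of $g$ gives scale-invariant control), handle the exponent mismatch on $u$ by a local Sobolev embedding, and then reassemble into a global bound by a second use of (\ref{L1}).

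In detail, set $p^{\ast}=pq/(q-p)$ when $q>p$ and $p^{\ast}=\infty$ when $q=p$, and fix some $\rho\in(0,d]$. First, applying (\ref{L1}) with $v=|gu|^{p}$ yields
\[
c_{1}\,\|gu\|_{L^{p}(\Omega)}^{\,p}\;\le\;\int_{\Omega}\rho^{-n}\,\|gu\|_{L^{p}(\Omega_{\rho}(x))}^{\,p}\,dx.
\]
On each $\Omega_{\rho}(x)$, H\"older with conjugate exponents $q/p$ and $q/(q-p)$ gives
\[
\|gu\|_{L^{p}(\Omega_{\rho}(x))}^{\,p}\;\le\;\|g\|_{L^{q}(\Omega_{\rho}(x))}^{\,p}\,\|u\|_{L^{p^{\ast}}(\Omega_{\rho}(x))}^{\,p},
\]
and the defining inequality (\ref{space}) of the Morrey-type norm bounds the $g$-factor by $\|g\|_{\mathcal M^{q,s/p}(\Omega)}^{\,p}\,\rho^{\,np/q-s}$.

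The hypothesis $h_{2})$ is tailored exactly so that $p^{\ast}$ lies within the Sobolev window for $W^{r,p}$: the inequality $q\ge n/r$ keeps $p^{\ast}$ at or below the critical exponent $np/(n-rp)$, and the strict inequality in the borderline case $n/r=p>1$ excludes the single configuration in which this embedding collapses. Using the interior cone condition $h_{1})$, one then has a local embedding on $\Omega_{\rho}(x)$ with a constant $c$ uniform in $x$,
\[
\|u\|_{L^{p^{\ast}}(\Omega_{\rho}(x))}^{\,p}\;\le\;c\,\|u\|_{W^{r,p}(\Omega_{\rho}(x))}^{\,p}.
\]
Substituting this and applying (\ref{L1}) again, derivative by derivative, to $\|u\|_{W^{r,p}(\Omega_{\rho}(\cdot))}^{\,p}$ gives
\[
\int_{\Omega}\rho^{-n}\,\|u\|_{W^{r,p}(\Omega_{\rho}(x))}^{\,p}\,dx\;\le\;c_{2}\,\|u\|_{W^{r,p}(\Omega)}^{\,p}.
\]
Chaining the inequalities and fixing, e.g., $\rho=d$ absorbs the surviving factor $\rho^{\,np/q-s}$ into the constant and yields (\ref{bound 2}).

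The delicate step I expect is the \emph{uniform} local Sobolev embedding on $\Omega_{\rho}(x)$: these sets are in general neither balls nor of regular shape, so a constant independent of $x$ requires that each $\Omega_{\rho}(x)$ inherit a cone of controlled size, which is precisely what $h_{1})$ supplies (possibly after passing to a slightly enlarged set $\Omega_{c\rho}(x)$ to accommodate a cone of the given opening and height). The borderline configuration $q=n/r=p>1$ excluded by $h_{2})$ is exactly the case where Sobolev embedding into $L^{p^{\ast}}$ fails, which is why the strict inequality there cannot be dropped.
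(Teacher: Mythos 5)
Your architecture is the one the paper itself points to: Theorem~\ref{embedding theo} is not proved in this paper but quoted from \cite{8}, and the only indication given is that the proof rests on the localization inequality (\ref{L1}); your scheme (localize $|gu|^p$ with the left inequality of (\ref{L1}), apply H\"older with exponents $q/p$ and $q/(q-p)$ on each $\Omega_\rho(x)$, absorb the $g$-factor into $\|g\|_{\mathcal M^{q,s/p}(\Omega)}$ via a factor $\rho^{np/q-s}$, and reassemble the $u$-factor with the right inequality of (\ref{L1}) applied to each $|D^\alpha u|^p$) is exactly that strategy, and your reading of $h_2)$ is correct: $q\ge n/r$ is precisely the condition that places $p^{\ast}=pq/(q-p)$ at or below $np/(n-rp)$, and the strict inequality is needed only in the borderline case $n/r=p>1$ where the embedding into $L^\infty$ fails.

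The genuine gap is the step you flag but do not prove: the claim that $\|u\|_{L^{p^{\ast}}(\Omega_\rho(x))}\le c\,\|u\|_{W^{r,p}(\Omega_\rho(x))}$ with $c$ uniform in $x$. The sets $\Omega_\rho(x)=B_\rho(x)\cap\Omega$ do not inherit a uniform cone property from $h_1)$, and the uniform embedding on them is in fact false in general: take, say, $\Omega$ the union of a half-space and a disjoint parallel slab (which satisfies $h_1)$), and $\rho$ slightly larger than the distance from $x$ to the slab; then $\Omega_\rho(x)$ has a thin lens-shaped extra component $E$ of arbitrarily small measure, and $u=\chi_{\sst E}$ lies in $W^{r,p}(\Omega_\rho(x))$ with $\|u\|_{L^{p^{\ast}}}/\|u\|_{W^{r,p}}=|E|^{1/p^{\ast}-1/p}\rightarrow+\infty$. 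Your parenthetical remedy, ``pass to $\Omega_{c\rho}(x)$'', does not by itself help, since $\Omega_{c\rho}(x)$ is again a ball intersection with the same defect near its own sphere; what is true, and what the argument needs, is the \emph{relative} inequality $\|u\|_{L^{p^{\ast}}(\Omega_\rho(x))}\le c\,\|u\|_{W^{r,p}(\Omega_{2\rho}(x))}$ with $c$ independent of $x$. This can be obtained by inserting an intermediate domain: with $h'\le\rho$ the height furnished by $h_1)$, set $A_\rho(x)=\Omega_\rho(x)\cup\bigcup_{y\in\Omega_\rho(x)}C_\theta(y,h')$; every point of a truncated cone is the vertex of a cone of opening and height depending only on $\theta$ and $h'$ contained in that cone, so $A_\rho(x)$ has the cone property with parameters independent of $x$, while $\Omega_\rho(x)\subset A_\rho(x)\subset\Omega_{2\rho}(x)$, and the Sobolev theorem on $A_\rho(x)$ gives the relative inequality. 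With that substitution (and $2\rho$ in place of $\rho$ in the Morrey bound and in the second use of (\ref{L1})) your chain of inequalities closes and yields (\ref{bound 2}); as written, however, the crucial uniform local embedding is asserted on a family of sets for which it fails, so the proof has a hole at its central step.
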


\bigskip

The next two lemma state $L^p$ estimates.

\bigskip

\begin{lemma}\label{tilde 1}
If $h_1)$, $h_2)$ hold and 
$g\in {\tilde {\mathcal M}}^{q,\frac{s}{p}(\Omega)}$, $s\le p$, 
then for any $\epsilon\in {R_+}$ there 
exists $c(\epsilon)\in {R_+}$ such that 
$$
\|gu\|_{L^p(\Omega)}\le \epsilon\|u\|_{W^{r,p}(\Omega)}+
c(\epsilon) \|u\|_{L^p(\Omega)}
\qquad \forall u\in {W^{r,p}(\Omega)}.
$$
\end{lemma}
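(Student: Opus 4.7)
The strategy is a standard density splitting trick: write $g$ as a good $L^\infty$ piece plus a small residue in $\mathcal{M}^{q,s/p}$, bound the rough residue against the full Sobolev norm via Theorem \ref{embedding theo}, and bound the bounded piece trivially in $L^p$.

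More precisely, let $C = C(p,q,r,n)$ be the constant furnished by Theorem \ref{embedding theo}, so that
$$\|hu\|_{L^p(\Omega)} \le C\,\|h\|_{\mathcal{M}^{q,s/p}(\Omega)}\,\|u\|_{W^{r,p}(\Omega)}$$
for every $h\in \mathcal{M}^{q,s/p}(\Omega)$ and every $u\in W^{r,p}(\Omega)$. Since $g$ lies in $\tilde{\mathcal{M}}^{q,s/p}(\Omega)$, which by definition is the closure of $L^\infty(\Omega)$ in $\mathcal{M}^{q,s/p}(\Omega)$, for any $\epsilon>0$ there exists $g_\epsilon\in L^\infty(\Omega)$ such that
$$\|g-g_\epsilon\|_{\mathcal{M}^{q,s/p}(\Omega)} \le \frac{\epsilon}{C}.$$

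Now decompose $gu = (g-g_\epsilon)u + g_\epsilon u$ and apply the triangle inequality in $L^p(\Omega)$. For the first term, Theorem \ref{embedding theo} applied to $h=g-g_\epsilon$ gives
$$\|(g-g_\epsilon)u\|_{L^p(\Omega)} \le C\,\|g-g_\epsilon\|_{\mathcal{M}^{q,s/p}(\Omega)}\,\|u\|_{W^{r,p}(\Omega)} \le \epsilon\,\|u\|_{W^{r,p}(\Omega)}.$$
For the second term, since $g_\epsilon\in L^\infty(\Omega)$ we simply use
$$\|g_\epsilon u\|_{L^p(\Omega)} \le \|g_\epsilon\|_{L^\infty(\Omega)}\,\|u\|_{L^p(\Omega)}.$$
Setting $c(\epsilon) := \|g_\epsilon\|_{L^\infty(\Omega)}$ and summing the two bounds yields exactly the claimed inequality.

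There is no real obstacle: once Theorem \ref{embedding theo} is in hand, the statement is essentially the abstract principle that if an operator $T\colon X\to Y$ with $X\hookrightarrow Z$ satisfies $\|Tu\|_Y \le C\|h\|_A\|u\|_X$ linearly in a parameter $h$, then taking $h$ in the $L^\infty$-closure inside $A$ allows one to trade an arbitrarily small $X$-norm contribution against a large $Z$-norm contribution. The only detail worth checking is the harmless one that the tuning of $g_\epsilon$ depends on $\epsilon$ and on $g$, but not on $u$, which is clear from the construction.
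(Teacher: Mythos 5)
Your proof is correct and follows exactly the paper's own argument: approximate $g$ by $g_\epsilon\in L^\infty(\Omega)$ within $\epsilon/C$ in the $\mathcal{M}^{q,s/p}$ norm (using the definition of $\tilde{\mathcal{M}}^{q,s/p}$ as the closure of $L^\infty$), apply Theorem \ref{embedding theo} to the residue $(g-g_\epsilon)u$, and bound $g_\epsilon u$ by $\|g_\epsilon\|_{L^\infty(\Omega)}\|u\|_{L^p(\Omega)}$, taking $c(\epsilon)=\|g_\epsilon\|_{L^\infty(\Omega)}$. Nothing further is needed.
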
 

\medskip

\begin{proof}
Let $\phi_\epsilon\in L^\infty(\Omega)$ such that 
\begin{equation}\label{phi}
\|g-\phi_\epsilon\|_{{\mathcal M}^{q,\frac{s}{p}}(\Omega)}
\le {\epsilon/c},
\end{equation}
where the constant $c$ is the constant in the bound (\ref{bound 2}). 
Then by Theorem 4.1

\begin{equation}
\begin{split}
\|gu\|_{L^p(\Omega)} &\le
\|(g-\phi_\epsilon)u\|_{L^p(\Omega)}+
\|\phi_\epsilon u\|_{L^p(\Omega)}\le
\\&
\le c\|(g-\phi_\epsilon)u\|_{{\mathcal M}^{q,\frac{s}{p}}(\Omega)}
\|u\|_{W^{r,p}(\Omega)}+
\|\phi_\epsilon\|_{L^\infty(\Omega)}
\|u\|_{L^p(\Omega)}\le
\\ &
\le\epsilon \|u\|_{W^{r,p}(\Omega)}+c(\epsilon)
\|u\|_{L^p(\Omega)}
\end{split}
\end{equation}
with $c(\epsilon)=
\|\phi_\epsilon\|_{L^\infty(\Omega)}$.

\end{proof}

\bigskip

\begin{lemma}
If $h_1)$, $h_2)$ hold and $g\in {\mathcal M}^{q,\frac{s}{p}}_0(\Omega)$, $s\le p$,
then for any 
$\epsilon\in {R_+}$ there exist $c(\epsilon)\in {R_+}$ and an open set 
$\Omega_\epsilon \subset \subset \Omega$ with cone property such that 

\begin{equation}\label{bound 3}
\|gu\|_{L^p(\Omega)}\le
\epsilon \|u\|_{W^{r,p}(\Omega)}+c(\epsilon) 
\|u\|_{L^p(\Omega_\epsilon)} 
\qquad \forall u\in W^{r,p}(\Omega).
\end{equation}
\end{lemma}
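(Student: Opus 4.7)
The proof will closely parallel the argument of Lemma 4.2 (the "tilde" version), with the extra ingredient being that elements of $\mathcal{M}^{q,s/p}_0(\Omega)$ can be approximated by functions with \emph{compact} support in $\Omega$, which localizes the lower-order term to a compactly contained subset.

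By definition of $\mathcal{M}^{q,s/p}_0(\Omega)$, for any $\epsilon\in R_+$ there exists $\phi_\epsilon \in C^\infty_0(\Omega)$ such that
$$
\|g-\phi_\epsilon\|_{\mathcal{M}^{q,s/p}(\Omega)} \le \epsilon/c,
$$
where $c$ is the constant from Theorem \ref{embedding theo}. Splitting $gu = (g-\phi_\epsilon)u + \phi_\epsilon u$ and applying the triangle inequality gives
$$
\|gu\|_{L^p(\Omega)} \le \|(g-\phi_\epsilon)u\|_{L^p(\Omega)} + \|\phi_\epsilon u\|_{L^p(\Omega)}.
$$
The first term is controlled by Theorem \ref{embedding theo} by $\epsilon\,\|u\|_{W^{r,p}(\Omega)}$, exactly as in the proof of Lemma \ref{tilde 1}. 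The new point is the treatment of the second term.

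For the second term, since $K_\epsilon := \mathrm{supp}\,\phi_\epsilon$ is a compact subset of $\Omega$, we may choose an open set $\Omega_\epsilon$ with the cone property such that $K_\epsilon \subset \Omega_\epsilon \subset\subset \Omega$ (for instance, a finite union of open balls covering $K_\epsilon$ and contained in $\Omega$, or a smooth tubular neighborhood of $K_\epsilon$). Then
$$
\|\phi_\epsilon u\|_{L^p(\Omega)} = \|\phi_\epsilon u\|_{L^p(K_\epsilon)} \le \|\phi_\epsilon\|_{L^\infty(\Omega)}\,\|u\|_{L^p(\Omega_\epsilon)}.
$$
Setting $c(\epsilon) = \|\phi_\epsilon\|_{L^\infty(\Omega)}$, we combine the two bounds to obtain the claimed estimate (\ref{bound 3}).

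The main (and essentially only) subtlety is the construction of $\Omega_\epsilon$ with the cone property: this needs a concrete choice, since compact containment alone does not guarantee regularity of the boundary. Taking $\Omega_\epsilon$ to be a finite union of open balls that covers $K_\epsilon$ and lies in $\Omega$ suffices, as such a set is well-known to satisfy the cone property.
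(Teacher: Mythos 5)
Your proposal is correct and takes essentially the same route as the paper: the same approximation of $g$ by $\phi_\epsilon\in C^\infty_0(\Omega)$ within $\epsilon/c$, the same estimate of $(g-\phi_\epsilon)u$ via Theorem \ref{embedding theo} as in Lemma \ref{tilde 1}, and the same constant $c(\epsilon)=\|\phi_\epsilon\|_{L^\infty(\Omega)}$ for the term supported near $\mathrm{supp}\,\phi_\epsilon$. The only (inessential) difference is the construction of $\Omega_\epsilon$: the paper takes the union of the cones in $\Gamma(\Omega,\theta,h_\epsilon)$, with $h_\epsilon$ less than half the distance of $\mathrm{supp}\,\phi_\epsilon$ from $\partial\Omega$, that meet $\mathrm{supp}\,\phi_\epsilon$, whereas you take a finite union of balls covering the support, which works equally well provided the balls are chosen with closure in $\Omega$ so that indeed $\Omega_\epsilon\subset\subset\Omega$.
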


\medskip 

\begin{proof}

Let $\phi_\epsilon\in C^\infty_0(\Omega)$ be such that (\ref{phi}) holds.

Reasoning as in the proof of the Lemma 4.2 we get 

\begin{equation}\label{bound 4}
\|gu\|_{L^p(\Omega)}\le 
\epsilon \|u\|_{W^{r,p}(\Omega)}+
c(\epsilon) \|u\|_{p,supp \phi_\epsilon},
\end{equation} 
with $c(\epsilon)$ as in the Lemma 4.2.

Then let us fix $\theta\in ]0,{\pi/ 2}[$ and $h_\epsilon\in ]0,\hbox 
{dist}(\partial\Omega,\hbox {supp}\, \phi_\epsilon)/ 2[$. If we 
denote by $\Omega_\epsilon$ the open set of $R^n$ union of the cones $C\in 
\Gamma (\Omega,\theta,h_\epsilon)$ such that $C\cap \;\hbox{supp}\; 
g_\epsilon \ne \emptyset$, then (\ref{bound 3}) follows from (\ref{bound 4}). 

\end{proof}

Let us define the modulus of continuity of a function 
$g\in{\tilde {\mathcal M}^{p,s}(\Omega)}$. 
First let us introduce the function

$$
\tau^p_s[g](t)=\sup_{E\in \Sigma(\Omega)\atop 
\displaystyle\mathop{\hbox {sup}}_{x}
\scriptstyle{|E\cap B_d (x)|\le t}}
\,\|g\,\chi_E\|_{\mathcal M^{p,s}(\Omega)}\,, 
\qquad t\in R_+\,,
$$
where $\chi_E$ is the characteristic function of $E$.
It follows by Lemma \ref{sigma} that
that $g\in{\tilde {\mathcal M}}^{p,s}(\Omega)$ if and 
only if $g\in \mathcal M^{p,s}(\Omega)$ and
$$
\lim_{t\rightarrow 0}\tau^p_s[g](t)=0\,.
$$
We define the {\it modulus of continuity} of 
$g\in{\tilde{\mathcal  M}}^{p,s}(\Omega)$ 
as a function $\tau[g]: R_+ \rightarrow R_+$ satisfying

$$
\tau^p_s[g](t)\le \tau[g](t)\qquad 
\forall t\in R_+\,,\qquad \quad
\lim_{t\rightarrow 0}\tau\,[g](t)=0\,.
$$
If $g\in L^p_{loc}(\overline\Omega)$, we get

\begin{equation}\label{omega a zero}
\lim_{r\rightarrow +\infty}\sup_x\left|\Omega_r(g)\cap B_d(x)\right|=0\,,
\end{equation}
where $\Omega_r(g)$ is defined in (\ref{omega r}). 
Let us denote, for any $k\in R_+$, by $r_k=r_k(g)$ a real number such that

$$
\sup_x\left|\Omega_{r_k}(g)\cap B_d(x)\right|\le
{\frac{1}{k}}
$$
and by $r[g]$ the function
\begin{equation}\label{r}
r[g]:k\in R_+\rightarrow r[g](k)=r_k\in R_+\,.
\end{equation}
Now we state the following lemma in which
we emphasize the dependence of the constants in the final bound.

\bigskip

\begin{lemma}
In the same hypotheses of Lemma  \ref{tilde 1}
for any $k\in R_+$ we get
$$
\|g\,u\|_{L^p(\Omega)}\le C\,\tau[g]\left
({1\over {k}}\right)\,\|u\|_{W^{r,p}(\Omega)}
+r[g](k)\,\|u\|_{L^p(\Omega)}\qquad\forall\,u\in W^1(\Omega)\,,$$
where $C$ is the constant in (\ref{bound 2}), $\tau[g]$ is the modulus of 
continuity of $g$ in $ {\tilde {\mathcal M}}^{q,\frac{s}{p}}(\Omega)$ 
and $r[g]$ is the function defined by (\ref{r}).
\end{lemma}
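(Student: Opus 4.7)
The plan is to refine the proof of Lemma \ref{tilde 1} by making a very specific choice of bounded approximation of $g$, rather than invoking an abstract $\phi_\epsilon\in L^\infty(\Omega)$, so that both the Morrey error and the $L^\infty$-norm of the truncation can be read off the data of $g$ at a single parameter $k$.

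Concretely, for a fixed $k\in R_+$ I would set $r_k=r[g](k)$ (the existence of which is granted by (\ref{omega a zero}) and definition (\ref{r})) and split
$$
g = g\,\chi_{\sst \Omega_{r_k}(g)} + g\,\bigl(1-\chi_{\sst \Omega_{r_k}(g)}\bigr).
$$
The second summand is pointwise bounded by $r_k$, hence
$$
\|g\,(1-\chi_{\sst \Omega_{r_k}(g)})\,u\|_{L^p(\Omega)}\le r[g](k)\,\|u\|_{L^p(\Omega)}.
$$
For the first summand, the defining property of $r_k$ gives $\sup_x |\Omega_{r_k}(g)\cap B_d(x)|\le 1/k$, and therefore, straight from the definition of $\tau^q_{s/p}[g]$ and of the modulus of continuity,
$$
\|g\,\chi_{\sst \Omega_{r_k}(g)}\|_{{\mathcal M}^{q,s/p}(\Omega)}\le \tau^q_{s/p}[g]\bigl(1/k\bigr)\le \tau[g]\bigl(1/k\bigr).
$$
Applying Theorem \ref{embedding theo} to $g\,\chi_{\sst \Omega_{r_k}(g)}\in {\mathcal M}^{q,s/p}(\Omega)$ then yields
$$
\|g\,\chi_{\sst \Omega_{r_k}(g)}\,u\|_{L^p(\Omega)}\le C\,\tau[g]\bigl(1/k\bigr)\,\|u\|_{W^{r,p}(\Omega)},
$$
with $C$ the constant appearing in (\ref{bound 2}). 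Combining the two pieces via the triangle inequality produces the claimed bound.

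The only step that requires a short verification is the existence of $r_k=r[g](k)$ for every $k\in R_+$, i.e.\ the relation (\ref{omega a zero}); this follows from a uniform Chebyshev-type argument of exactly the form already employed inside the proof of Lemma \ref{sigma} when bounding $\sup_x \rho^{-n}|\Omega_r(g)\cap B_\rho(x)|$ by $c\,\|g\|^p_{{\mathcal M}^{p,s}(\Omega)}/r^p$. Apart from this bookkeeping, the argument is a direct substitution into Theorem \ref{embedding theo} plus the definitions of $\tau[g]$ and $r[g]$, and I do not anticipate any real obstacle.
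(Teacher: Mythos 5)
Your proposal is correct and follows essentially the same route as the paper: the paper's proof also sets $g_k=(1-\chi_{\Omega_{r_k}})g$, bounds $\|g_k u\|_{L^p(\Omega)}$ by $r[g](k)\|u\|_{L^p(\Omega)}$, and applies Theorem \ref{embedding theo} to $g-g_k=g\chi_{\Omega_{r_k}}$ together with the definition of the modulus of continuity. Your write-up is in fact slightly more explicit than the paper's (spelling out $\|g\chi_{\Omega_{r_k}}\|_{{\mathcal M}^{q,s/p}(\Omega)}\le\tau^{q}_{s/p}[g](1/k)\le\tau[g](1/k)$ and the Chebyshev-type justification of (\ref{omega a zero})), but the argument is the same.
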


\medskip

\begin{proof} Let
$$
g_k=(1-\chi_{\Omega_{r_k}})\,g\,.
$$
The function $g_k$ so defined belongs to the space $L^\infty(\Omega)$.
From Theorem \ref{embedding theo} we get

\begin{equation}
\begin{split}
\|g\,u\|_{L^p(\Omega)} &\le
\|(g-g_k)\,u\|_{L^p(\Omega)}+\|g_k\,u\|_{L^p(\Omega)}\le 
\\& \le
C\|g-g_k\|_{\mathcal M^{q,\frac{s}{p}}(\Omega)}\, \|u\|_{W^{r,p}(\Omega)}+
\|g_k\,u\|_{L^p(\Omega)}=
\\ & =
C\|g\chi_{\Omega_{r_k}}\|_{\mathcal M^{p,s}(\Omega)}\,\|u\|_{W^{r,p}(\Omega)}+
r[g](k)\|u\|_{L^p(\Omega)}\,.
\end{split}
\end{equation}
Taking in mind (\ref{omega a zero}) and modulus of continuity we deduce the result.

\end{proof}

\bigskip\bigskip

\section{Compactness result}

\bigskip

Now we state the compactness result.

\bigskip

\begin{thm}
If $h_1)$, $h_2)$ hold and $g\in 
{\mathcal M}^{q,\frac{s}{p}}_0(\Omega)$, $s\le p$, 
then the operator 
$$
u\in {W^{r,p}(\Omega)} 
\longrightarrow gu\in L^p(\Omega)
$$ 
is compact.
\end{thm}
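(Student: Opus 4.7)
The plan is to deduce compactness from the $L^p$-estimate in Lemma 4.3 combined with the classical Rellich-Kondrachov compact embedding and a diagonal extraction. Let $\{u_n\}$ be any bounded sequence in $W^{r,p}(\Omega)$, say with $\|u_n\|_{W^{r,p}(\Omega)}\le M$. The goal is to extract a subsequence $\{v_n\}$ such that $\{g\,v_n\}$ is Cauchy in $L^p(\Omega)$.

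First I would exploit Lemma 4.3 in its difference form: for each $k\in\mathbb{N}$, apply it with $\epsilon=1/k$ to obtain a constant $c_k:=c(1/k)$ and an open set $\Omega_k\subset\subset\Omega$ with the cone property such that
$$
\|g(u_n-u_m)\|_{L^p(\Omega)}\le \tfrac{1}{k}\,\|u_n-u_m\|_{W^{r,p}(\Omega)}+c_k\,\|u_n-u_m\|_{L^p(\Omega_k)}
$$
for all $n,m$. The first term is controlled uniformly by $2M/k$ thanks to the bound on $\{u_n\}$, so compactness will follow once the second term can be made arbitrarily small along a suitable subsequence.

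Next I would perform the standard diagonal extraction. Since $\Omega_1$ is bounded with the cone property, the Rellich-Kondrachov theorem yields a compact embedding $W^{r,p}(\Omega_1)\hookrightarrow L^p(\Omega_1)$, so there is a subsequence $\{u_n^{(1)}\}\subset\{u_n\}$ converging in $L^p(\Omega_1)$. Iterating, from $\{u_n^{(k-1)}\}$ I extract $\{u_n^{(k)}\}$ converging in $L^p(\Omega_k)$, and I then set $v_n:=u_n^{(n)}$. For any fixed $k$, the tail $\{v_n\}_{n\ge k}$ is a subsequence of $\{u_n^{(k)}\}$, hence $\{v_n\}$ is Cauchy in $L^p(\Omega_k)$.

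Finally I would combine the two ingredients: for every $k$, the estimate above applied to $v_n,v_m$ gives
$$
\limsup_{n,m\to\infty}\|g(v_n-v_m)\|_{L^p(\Omega)}\le \frac{2M}{k}+c_k\limsup_{n,m\to\infty}\|v_n-v_m\|_{L^p(\Omega_k)}=\frac{2M}{k},
$$
and letting $k\to\infty$ shows that $\{g\,v_n\}$ is Cauchy in $L^p(\Omega)$, proving compactness of the operator. The only delicate point is ensuring the Rellich-Kondrachov step is legitimate, but this is guaranteed precisely by the cone property granted in Lemma 4.3; the rest is a routine diagonal argument. I do not expect any genuine obstacle beyond keeping the two parameters (the smallness $\epsilon_k$ and the exhausting sets $\Omega_k$) properly synchronised in the extraction.
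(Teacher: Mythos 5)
Your proof is correct and follows essentially the same route as the paper: the $\epsilon$-estimate of Lemma 4.3 (with its sets $\Omega_\epsilon\subset\subset\Omega$ having the cone property) combined with the Rellich--Kondrachov compact embedding. The only difference is that you make the diagonal extraction over $\epsilon_k=1/k$ and the Cauchy criterion explicit, which the paper leaves implicit by working with a single $\Omega'$ and concluding briefly; your write-up is, if anything, the more complete version of the same argument.
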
 

\begin{proof}
We remark that for any open set $\Omega'\subset \subset \Omega$ 
the operator 
$$
u\in {W^{r,p}(\Omega)} \longrightarrow u\vert_{\Omega'}\in 
W^{r,p}(\Omega')
$$ 
is linear and bounded. 

On the other hand, if $\Omega'$ verifies the cone property too, by 
Rellich-Kondrachov's theorem the operator 
$$
u\in W^{r,p}(\Omega') \longrightarrow u\in L^p(\Omega')
$$ 
is compact. 
Then also the operator
$$
u\in {W^{r,p}(\Omega)} \longrightarrow u\vert_{\Omega'}\in 
L^p(\Omega')
$$
is compact.
Therefore if $\{u_n\}\in W^{r,p}(\Omega')$ is a bounded sequence
there exists a subsequence $\{u_{n_k}\}$ converging to $u$ in
$L^p(\Omega')$.
By Lemma 4.3 we get
$$
\|g(u_{n_k}-u)\|_{L^p(\Omega)}\le 
\epsilon \|u_{n_k}-u\|_{W^{r,p}(\Omega)}+
c(\epsilon) 
\|u_{n_k}-u\|_{L^p(\Omega')}
\qquad \forall u\in W^{r,p}(\Omega)
$$
from which we can deduce the result.
\end{proof}

\bigskip

\begin{Rem} 
We remark that if $g\in {\mathcal M}^{q,s}(\Omega)$, $s\ge 0$,
the results stated in Section 4 and Section 5 are still avalaible.
In the case  $s> 0$ this is due to the inclusion properties (see Theorem \ref{inclusions}).
Then the bounds stated when $g\in {\mathcal M}^{q,\frac{s}{p}}(\Omega)$ are of more general type.
\end{Rem}

\bigskip\bigskip

\renewcommand\refname{{\centerline {References}}}

\end{document}